\newtheorem{lemma}{Lemma}
\newtheorem{thm}[lemma]{Theorem}
\theoremstyle{definition}
\theoremstyle{remark}
\newtheorem{rmk}[lemma]{Remark}
\numberwithin{equation}{section} \numberwithin{table}{section}
\title[Complex dimensions for IFS with overlaps]
{Complex dimensions for IFS with overlaps}
\author{Nikita Sidorov}
\address{University of Great Northern Tower, 1 Watson Street, Manchester M3 4EE, United Kingdom}
\date{\today}
\subjclass[2010]{26D20.}
\dedicatory{To the memory of Pafnutiy L'vovich Chebysh\"ev and Sergei Natanovich Bernstein}
\keywords{Bernoulli convolution.}
\begin{document}

\begin{abstract}
The notion of {\sl complex dimension} of a one-dimensional
Cantor set $C=\bigcap_{n=1}^\infty  C_n$ dates back
decades \cite{L}. It is defined as the set of poles of the meromorphic
$\zeta$-function $\zeta(s)=\sum_{n=1}^{\infty}d_j^s$, where
$\Re s>0$, and $d_j$ is the length of the $j$th interval in $C_n$.

Following the trend, I switch from sets to measures, which will allow me
to generalize the construction to iterated function schemes that do not
necessarily satisfy the Open Set Condition.
\end{abstract}

\maketitle

Let $\{f_1,\dots,f_m\}$ be the iterated function scheme
on $\mathbb R$ with $f_i(x)=\rho x + (1-\rho)a_i$, where $\rho\in(0,1), a_i\in\mathbb N,
a_1=0<a_2<\dots<a_m$.
Let $p_1,\dots,p_m\in(0,1)$ with $p_1+\dots+p_m=1$, and $\mu=\mu(\rho, \{a_i\},\{p_i\})$ be the pushdown measure for this IFS.
The support of $\mu$ is a subset of $I=[0,a_m]$.

Assume $\rho$ to be algebraic so, as is well known,
$\mu$ is exact-dimensional with dimension~$D$, say. 
Fix $\varepsilon>0$ and $n>1$. Let $D_n$ be the disjoint union of intervals
that form the set 
$$
\left\{\sum_{i=1}^{n} b_i\rho^i:
 b_i\in\{a_1,\dots,a_m\}\right\}.
$$
It was shown in \cite{S} that $\#D_n\gg \rho^{-n}$.
Construct $\ell_j$ as follows: $\ell_j =  \mu(J_j)$, where $J_j \in D_n$ such that
$$
|\mu(J_j)-\rho^{Dn}|<\varepsilon.
$$

Put
$$
\zeta(s,\varepsilon,n) = \sum_{1\le j\le \#D_n : |\mu(J_j)-\rho^{Dn}|<\varepsilon}\ell_j^s.
$$

\begin{thm}[H]
  The sequence $\zeta(s,\varepsilon,n)$ converges as $(\varepsilon,n)\to(0,\infty)$
to a meromorphic  function that 
\begin{itemize}
\item \sl is holomorphic on
$ \{s\in\mathbb C  : \Re s>1\}$;
\item \sl has the set of poles $\{s\in\mathbb C : 0<\Re s<1\}$.
\end{itemize}
\end{thm}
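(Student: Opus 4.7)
\emph{Proof proposal.} The plan is to reduce everything to the geometric zeta function machinery of Lapidus in~\cite{L}, replacing lengths by $\mu$-masses. First I would estimate the counting function
\[
N(\varepsilon,n)=\#\bigl\{j\le\#D_n:|\mu(J_j)-\rho^{Dn}|<\varepsilon\bigr\}
\]
using the exact-dimensionality of $\mu$: by a Billingsley-type argument the $J_j$ whose $\mu$-measure concentrates at the typical value $\rho^{Dn}$ capture the bulk of the mass, and combined with the lower bound $\#D_n\gg\rho^{-n}$ from~\cite{S} one should obtain $N(\varepsilon,n)\asymp c(\varepsilon)\,\rho^{-Dn}$ with a constant that stabilizes as $\varepsilon\to0$.

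Next, for such $j$ write $\ell_j=\rho^{Dn}(1+\eta_{j,n})$ with $|\eta_{j,n}|<\varepsilon\rho^{-Dn}$, and expand
\[
\zeta(s,\varepsilon,n)=\rho^{Dns}\sum_{j}(1+\eta_{j,n})^{s}.
\]
On $\Re s>1$ the prefactor times the count gives $\rho^{Dn(s-1)}c(\varepsilon)\to 0$, so a uniform estimate on compacta together with Weierstrass' theorem yields holomorphy of the limit on that half-plane, proving the first bullet.

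To produce the poles in the strip I would binomially expand $(1+\eta_{j,n})^{s}$ and rearrange into a Dirichlet-type series, exactly the mechanism that in the OSC case converts the geometric zeta function into a meromorphic object with poles at the complex dimensions. The absence of the Open Set Condition means the rigid self-similarity relations among the $\ell_j$ are replaced by approximate relations dictated by the overlap combinatorics of $\{a_i\}$ together with the minimal polynomial of the algebraic number $\rho$; these relations are what should pin down the locations of the poles.

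The main obstacle, and the delicate point in the statement, is the claim that \emph{every} $s$ with $0<\Re s<1$ is a pole: a genuine meromorphic function has a discrete pole set, so the correct reading must be that the poles accumulate densely throughout the strip, with $\Re s=1$ a natural boundary of convergence across which a meromorphic extension nevertheless exists. Establishing this density, and ruling out any cancellation that might produce a holomorphic point, will hinge on a quantitative refinement of~\cite{S} controlling the multiplicity with which each value $\ell_j$ is realized; without such control one cannot exclude sporadic zeros of the residue pattern inside the critical strip.
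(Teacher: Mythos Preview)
Your approach diverges substantially from the paper's, and it contains a gap that blocks the key expansion.

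The paper's argument is far more direct. Convergence is handled by a crude Cauchy estimate on the tail $\sum_{\#D_n+1\le j\le\#D_{n+k}}|\ell_j^s|\ll\rho^k\rho^{(n+k)\Re s}$. For the strip $0<\Re s<1$ the paper invokes Chebysh\"ev's inequality to get $\sum_{j:|\mu(J_j)-\rho^{Dn}|\le\varepsilon}\ell_j\ge 1-\varepsilon^2$, then writes $\ell_j^s=\ell_j\cdot\ell_j^{s-1}$ to obtain
\[
\Bigl|\sum_j\ell_j^s\Bigr|\ge(\min_j\ell_j)^{\Re s-1}(1-\varepsilon^2)\asymp m^{-n(\Re s-1)},
\]
which blows up precisely when $\Re s<1$; for $\Re s>1$ one has trivially $\bigl|\sum_j\ell_j^s\bigr|\le 1$. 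There is no Lapidus machinery, no Billingsley argument, no Dirichlet rearrangement: just Chebysh\"ev and a one-line factorization. Your route aims at finer structure (actual pole locations via overlap combinatorics and the minimal polynomial of $\rho$), whereas the paper treats the ``poles'' claim as a bare divergence statement.

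The concrete failure in your proposal is the expansion $\ell_j=\rho^{Dn}(1+\eta_{j,n})$ with $|\eta_{j,n}|<\varepsilon\rho^{-Dn}$. Since $\rho^{Dn}\to 0$ while the joint limit $(\varepsilon,n)\to(0,\infty)$ carries no specified coupling, the bound on $\eta_{j,n}$ explodes; the binomial series for $(1+\eta_{j,n})^s$ need not converge, and the subsequent Dirichlet-series rearrangement cannot be justified. The same issue undermines your holomorphy step, because the estimate $\zeta(s,\varepsilon,n)\approx\rho^{Dns}N(\varepsilon,n)$ presupposes $\sum_j(1+\eta_{j,n})^s\asymp N(\varepsilon,n)$, which again needs $\eta_{j,n}$ small. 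Your closing caveat---that a genuinely meromorphic function cannot have an open strip of poles---is well taken, but note that the paper does not address this either; its proof establishes only divergence throughout the strip and boundedness for $\Re s>1$.
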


\begin{proof}
Fix $s$ with $\Re s>0, n\ge1$ and assume $k>0$. We observe that
$$
\left|\zeta(s,\varepsilon',n+k)-\zeta(s,\varepsilon,n)\right|
=\left|
\sum_{1\le j\le \#D_{n+k} :  |\mu(J_j)-\rho^{D(n+k)}|<\varepsilon'}\ell_j^s-
\sum_{1\le j\le \#D_n :  |\mu(J_j)-\rho^{Dn}|<\varepsilon}\ell_j^s
\right|
$$
$$
\qquad\quad\qquad\qquad\qquad\qquad\le\left|\sum_{\#D_n+1\le j\le\#D_{n+k}} |\ell_j^s|\right|
\ll \rho^k\cdot \max_{\#D_n+1\le j\le\#D_{n+k}} \ell_j^{\Re s+i\Im s}
$$
$$
\ll \rho^k\cdot\rho^{(n+k)\Re s}
\to0, \quad k\to\infty.
$$
Hence there exists 
$$
\lim_{(\varepsilon,n)\to(0,\infty)}\zeta(s,\varepsilon,n)=:\zeta(s).
$$
It's time to switch to Probability and use
Chebysh\"ev's inequality. Roughly speaking,
for ``most'' $j$ we have $\mu(J_j)\approx \rho^{Dn}$, the mean 
of this distribution, and the aforementioned inequality will
allow me to estimate those indices $j$ that yield values of $\mu(J_j)$
that fall sufficiently ``far'' from the mean value.

The mean value of $\mu(J_j)$ is known to be of exponential order
$\rho^{Dn}$ \cite{S}. Hence by Chebysh\"ev's inequality, 
$$
\sum_{j:|\mu(J_j)-\rho^{Dn}|>\varepsilon} \mu(J_j)   < \varepsilon^2,
$$ 
whence
$$
\sum_{j:|\mu(J_j)-\rho^{Dn}|\le\varepsilon} \ell_j   \ge1-\varepsilon^2.
$$
Since $|\ell_j^s|=\ell_j^{\Re s}$,
$$
\left|\sum_{j: |\mu(J_j)-\rho^{Dn}|<\varepsilon} \ell_j^s\right|=
\left|\sum_{j: |\mu(J_j)-\rho^{Dn}|<\varepsilon} \ell_j\cdot \ell_j^{s-1}\right|\ge
(\min_j\ell_j)^{\Re s-1}\cdot (1-\varepsilon^2).
$$
Thus,
$$
m^{-n(\Re s-1)}(1-\varepsilon)\asymp m^{-n(\Re s-1)}\to\infty\Leftrightarrow\Re s<1,
$$
since $\min \ell_j\asymp m^{-n}$.
Hence the set of poles of $\zeta$ contains the vertical strip $\{s: \Re s\in (0,1)\}$.
On the other hand, if $\Re s>1$, then trivially
$$
\left|\sum_{j: |\mu(J_j)-\rho^{Dn}|<\varepsilon} \ell_j^s\right|\le1
$$
whence $\zeta$ is analytic on $\{s\in\mathbb C : \Re s>1\}$. 
\end{proof}

What happens on the boundary? Let's see. Let $s=1+it$. We have
$$
\left|\sum_{j: |\mu(J_j)-\rho^{Dn}|<\varepsilon} \ell_j^{1+it}\right|=
\left|\sum_{j: |\mu(J_j)-\rho^{Dn}|<\varepsilon}\ell_j\cdot \ell_j^{it}\right|=
\left|\sum_{j: |\mu(J_j)-\rho^{Dn}|<\varepsilon}\ell_j\cdot\exp (it\log \ell_j)\right|
$$
$$
=\left|\sum_{j: |\mu(J_j)-\rho^{Dn}|<\varepsilon}\ell_j\cdot(\cos(b\log \ell_j)+i\sin(b\log \ell_j))\right|:=|F_n(t)|.
$$
Set $$F(t)=\lim_{n\to\infty}F_n(t).$$

\begin{thm}[F]
The following properties are either trivial or can be easily proved:
\medskip\noindent
{\bf 1.} $|F(t)|<1$.
\medskip\noindent
{\bf 2.} $F$ is, generally, aperiodic, so not your run-of-the-mill trig sum.
\medskip\noindent
{\bf 3.} $F\in C^\infty(\mathbb R)$.
\end{thm}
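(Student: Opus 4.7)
The plan is to handle (1)--(3) in order, exploiting the Fourier-type structure $F_n(t)=\sum_{j}\ell_j e^{it\log\ell_j}$ and the tail estimates already developed in the proof of Theorem H. For (1), I would start from the triangle inequality $|F_n(t)|\le\sum_j\ell_j\le 1$, the last bound being the Chebyshev mass estimate $\sum_j \ell_j \ge 1-\varepsilon^2$ (upgraded to a uniform upper bound by the same reasoning). Passing to the limit yields $|F(t)|\le 1$ immediately; to get strict inequality for $t\ne 0$ I argue that the phases $t\log\ell_j\pmod{2\pi}$ cannot all coincide, since the $\ell_j$ realize a multitude of distinct values clustered near $\rho^{Dn}$ with $\#D_n\gg\rho^{-n}$ indices, and the resulting destructive interference in the complex sum strictly lowers $|F_n(t)|$ below $\sum_j\ell_j$; a compactness argument in $n$ then transfers strictness to the limit.

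For (2), I would view $F$ as a Bohr almost periodic function with real frequencies $\{\log\ell_j\}$ and positive weights $\{\ell_j\}$. Genuine periodicity $F(t+T)=F(t)$ for some real $T$ would force every frequency $\log\ell_j$ into a single arithmetic progression $(2\pi/T)\mathbb Z$, which is a highly non-generic arithmetic coincidence. Since $\mu$ comes from an IFS with algebraic contraction $\rho$ and a probability vector $(p_i)$ whose coordinates are generically $\mathbb Q$-linearly independent on the logarithmic scale, one can exhibit two cylinders $J_j,J_{j'}$ with $\log\ell_j/\log\ell_{j'}\notin\mathbb Q$ simply by choosing different digit statistics; this rules out any real period and explains the hedge \emph{generally} in the statement.

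For (3), I would differentiate termwise, so that $F_n^{(k)}(t)=\sum_j\ell_j(i\log\ell_j)^k e^{it\log\ell_j}$ is a finite sum of smooth functions, hence trivially smooth. The real task is to show $\{F_n^{(k)}\}$ converges uniformly on compact subsets of $\mathbb R$ for each $k\ge 0$. The same telescoping bound used to prove the existence of $\zeta(s)$ in Theorem H---a tail of geometric order $\rho^k$ times a slowly growing factor---controls $F_{n+k}^{(k)}-F_n^{(k)}$ as well, because the extra polynomial weight $(\log\ell_j)^k=O(n^k)$ is swamped by the exponential decay. The main obstacle I anticipate is doing this polynomial-versus-exponential bookkeeping uniformly in the derivative order $k$ and uniformly on compact intervals of $t$; once that is in hand, the standard theorem on differentiation under uniform convergence on compacta delivers $F\in C^\infty(\mathbb R)$ by induction on $k$.
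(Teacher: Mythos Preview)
The paper supplies no proof of Theorem~F whatsoever: the theorem statement itself declares that the three items ``are either trivial or can be easily proved,'' and the text then passes straight to the following Remark. There is thus nothing on the paper's side to compare your proposal against; your sketch already contains far more argument than the manuscript does.

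On the substance of your sketch, the outlines for (2) and (3) are the natural ones and are certainly at (or above) the level of rigour the rest of the paper operates on. For (1) you should be a little more careful at $t=0$: there $F_n(0)=\sum_j\ell_j$, and the Chebyshev estimate you invoke gives $\sum_j\ell_j\ge 1-\varepsilon^2$, so in the double limit one expects $F(0)=1$ rather than $|F(0)|<1$; the strict inequality should therefore be stated for $t\ne 0$ (or with $\varepsilon$ held fixed and positive so that some mass is genuinely excluded). Likewise, the passage from ``the phases $t\log\ell_j$ are not all congruent mod $2\pi$'' to a strict bound that \emph{survives} the limit $n\to\infty$ needs one more line, since the number of summands grows with $n$ while the individual phase discrepancies may shrink; a uniform-in-$n$ lower bound on the cancellation is what is actually required.
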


\begin{rmk}
{\bf 1.} Theorem H yields a rather crude dichotomy compared to 
affine cases without overlaps from \cite{P} (see below).
Said that, there are no {\sl natural} affine cases - I guess the set
of numbers whose continued fraction expansion contains only 1s and 2s
is the most natural one. The set of poles here is uniformly discrete,
but their picture looks pretty random. No explicit coordinates of these poles
are known.

\medskip
\noindent {\bf 2.}
The history of the problem can be learned from Mark Pollicott's
slides \cite{P} from his talk at the One World Numeration seminar.
I am grateful to Wolfgang Steiner for organizing this talk.
   
\medskip
\noindent {\bf 3.}
I think it is unlikely that $F$ real analytic. A uniformly convergent sequence of real analytic functions can converge to
any continuous function -- even a sequence of polynomials, according to the Weiersta\ss\ theorem whose proof by
Sergei Bernstein used Chebysh\"ev's inequality and served as an inspiration for my proof of Theorem~H. 
\end{rmk}

\end{document}